%
\documentclass[10pt,twoside,reqno]{amsart}
\usepackage{amsmath}
\usepackage{amsfonts}
\usepackage{amssymb}
\usepackage{color}
\usepackage{mathrsfs}
\usepackage{cite}
\usepackage{times}
\textwidth 15cm 
\textheight 20cm 
\headheight 15pt 
\headsep 0.2in
\oddsidemargin -1cm 
\evensidemargin -1cm 
\topmargin 0cm
%
%
 
\newcommand{\R}{\mathbb R}

\newcommand{\Xq}{\mathbb{X}_T}

\newcommand{\ve}{\varepsilon} 
%
%

\newcommand{\rd}{\mathrm{d}}

%
%

\newcommand{\bean}{\begin{eqnarray}}
\newcommand{\eean}{\end{eqnarray}}
\DeclareMathAlphabet{\mathpzc}{OT1}{pzc}{m}{it}
%

%
\newtheorem{theorem}{Theorem}[section]

\newtheorem{proposition}[theorem]{Proposition}

\newtheorem{remark}[theorem]{Remark}
\numberwithin{equation}{section}

\title{A Remark on a Nonlocal-in-Time Heat Equation}

\begin{document}

\author{Christoph Walker}
\address{Leibniz Universit\"at Hannover\\ Institut f\" ur Angewandte Mathematik \\ Welfengarten 1 \\ D--30167 Hannover\\ Germany}
\email{walker@ifam.uni-hannover.de}
%
%
\date{\today}
\keywords{Semilinear heat equation, nonlocal in time, existence of global solutions}
\subjclass[2010]{35K58}
%
%
\begin{abstract}
Schauder's fixed point theorem is used to derive the existence of solutions to a semilinear heat equation. The equation features a nonlinear term that depends on the time-integral of the unknown on the whole, a priori given, interval of existence.
\end{abstract}

%
%

\date{\today}

\maketitle

\section{Introduction}

This note is dedicated to the nonlocal problem
\begin{subequations}\label{1}
\begin{align}
\partial_t u-\mathrm{div}\big(d(x) \nabla u\big)+\varphi\left(\int_0^\infty a(s,x)u(s,x)\, \rd s\right) u&=f(t,x)\,,\quad (t,x)\in (0,\infty)\times\Omega\,,\\  u(0,\cdot)=u^0\,,\quad u\vert_{\partial\Omega}=0
\end{align}
\end{subequations}
on  a bounded smooth domain $\Omega\subset\R^n$ with $d\in C^1(\bar\Omega)$ such that $d(x)\ge d_0>0$ for $x\in \Omega$. The potential~$\varphi$,  the weight $a$, the initial datum $u^0$, and the right-hand side $f$ are  suitable given functions. 

Equation~\eqref{1} is used in the modeling of a biological nanosensor in the chaotic dynamics of a polymer chain in an aqueous solution and has been introduced and considered in \cite{StaraStara17,Stara18,Stara20,Stara22}. It can also be seen as a toy model for equilibrium states in age-structured diffusive populations (with $t$ referring to the age of individuals), see \cite{ChWCrelle11,ChWJEPE18} for instance.

Note that the unknown weighted time-integral 
$$
\bar{u}=\int_0^\infty a(s)\, u(s)\,\rd s
$$
 depends on the whole, a priori {\it given} interval of existence $(0,\infty)$ (the case of a bounded time interval $(0,T)$ is included in \eqref{1}, of course, by taking $a$ with bounded support). Hence, only global solutions are of interest. Moreover, \eqref{1} is no usual evolution problem satisfying a Volterra property since solutions at a time instant depend also  on future time instants. For the homogeneous version of~\eqref{1} on a bounded time interval $(0,T)$ with vanishing right-hand side $f$ and without weight~$a$, existence of weak solutions was derived in \cite{Stara18,Stara20} and strong solutions in \cite{ChWQAM}. The non-homogeneous problem~\eqref{1} on a bounded interval $(0,T)$ was investigated in \cite{Stara22} where, for unbounded potentials $\varphi$, a truncation approach and weak compactness methods were used to prove the existence of weak solutions under fairly general conditions. 

The purpose of this work is to propose an alternative approach to~\eqref{1} for deriving the existence of mild and strong solutions under slightly different conditions. This approach has been used in~\cite{ChWQAM} and may also be a template for other nonlocal problems. More precisely,
we shall use the fact that solutions to \eqref{1} may be written as  mild solutions in the form
\begin{equation}\label{u1}
u(t)=e^{t A(\bar{u})} u^0+\int_0^t e^{(t-s) A(\bar{u})} \, f(s)\,\rd s\,,\quad t\ge 0\,,
\end{equation}
where  $(e^{t A(\bar{u})})_{t\ge 0}$ is the contraction semigroup on $L_p(\Omega)$ generated by the operator $$A(\bar{u}):=\big[ w\mapsto \mathrm{div}\big(d(x) \nabla w\big)-\varphi(\bar{u})w\big]$$ subject to  Dirichlet boundary conditions (see below for details).
Integrating the representation~\eqref{u1}  yields the equivalent fixed point equation 
\begin{equation}\label{FP}
\bar{u}=\int_0^\infty a(t)\,e^{t A(\bar{u})} u^0\,\rd t+\int_0^\infty a(t)\,\int_0^t e^{(t-s) A(\bar{u})} \, f(s)\,\rd s\,\rd t
\end{equation}
for $\bar u$. We then shall focus on this fixed point equation and prove, in particular, that the right-hand side of~\eqref{FP} enjoys suitable compactness properties with respect to $\bar u$ that allow us to apply Schauder's theorem leading to the following existence result:

\begin{theorem}\label{T1}
Let  $a\in L_1(\R^+,L_\infty(\Omega))$, $\varphi\in C(\R,\R^+)$, and $f\in L_1(\R^+,L_\infty(\Omega))\cap L_{\infty,loc}(\R^+,L_p(\Omega))$ for some $p\in (\max\{1,n/2\},\infty)$ and let $u^0\in L_\infty(\Omega)$.
Then there is a mild solution $u\in C\big(\R^+,L_p(\Omega)\big)$
to~\eqref{1} such that
$$
\|u(t)\|_\infty\le \|u^0\|_\infty+\|f\|_{L_1((0,t),L_\infty(\Omega))}
\,,\quad t\in \R^+\,.
$$
If $f\in C^\theta(\R^+,L_q(\Omega))+C(\R^+,W_q^\theta(\Omega))$ with $\theta>0$ and $q\in (1,\infty)$, then $u$ is a strong solution with
 $$
u\in C\big(\R^+,L_q(\Omega)\big)\cap C^1\big(\dot{\R}^+,L_q(\Omega)\big)\cap C\big(\dot{\R}^+,W_{q}^2(\Omega)\big)\,,
$$
where $\dot{\R}^+:=(0,\infty)$.
\end{theorem}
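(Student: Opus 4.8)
The plan is to study the fixed point equation~\eqref{FP} for $\bar u$ on the set
\[
M:=\{\bar v\in L_p(\Omega):\ \|\bar v\|_\infty\le R\}\,,\qquad
R:=\big(\|u^0\|_\infty+\|f\|_{L_1(\R^+,L_\infty(\Omega))}\big)\,\|a\|_{L_1(\R^+,L_\infty(\Omega))}\,,
\]
which is a nonempty, closed, bounded, convex subset of $L_p(\Omega)$. For $\bar v\in M$ one has $\varphi(\bar v)\in L_\infty(\Omega)$ with $0\le\varphi(\bar v)\le\Lambda:=\max_{[-R,R]}\varphi$, so $A(\bar v)$ is a bounded perturbation, uniformly in $\bar v\in M$, of $A_0:=[w\mapsto\mathrm{div}(d\nabla w)]$ with homogeneous Dirichlet conditions. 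Hence $(e^{tA(\bar v)})_{t\ge0}$ is a contraction semigroup on every $L_r(\Omega)$, $1\le r\le\infty$, it is analytic with domain $W_p^2(\Omega)$ (subject to the boundary condition), it satisfies $\|(-A_0)^{1/2}e^{tA(\bar v)}\|_{\ml(L_p(\Omega))}\le c\,t^{-1/2}$ uniformly on bounded time intervals and uniformly in $\bar v\in M$, and --- because $\varphi(\bar v)\ge0$ forces the pointwise domination $0\le e^{tA(\bar v)}g\le e^{tA_0}g$ for $g\ge0$ --- it inherits the ultracontractive bound $\|e^{tA(\bar v)}g\|_\infty\le c\,t^{-n/(2p)}\|g\|_p$ of the Dirichlet heat semigroup, again with $c$ independent of $\bar v\in M$ (and of $t$ on bounded intervals). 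Writing $u[\bar v](t):=e^{tA(\bar v)}u^0+\int_0^t e^{(t-s)A(\bar v)}f(s)\,\rd s$ and $\Phi(\bar v):=\int_0^\infty a(t)\,u[\bar v](t)\,\rd t$, the goal is to apply Schauder's theorem to $\Phi$ on $M$.

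First, $\Phi$ maps $M$ into $M$: the $L_\infty$-contractivity gives $\|u[\bar v](t)\|_\infty\le\|u^0\|_\infty+\|f\|_{L_1((0,t),L_\infty(\Omega))}$, whence $\|\Phi(\bar v)\|_\infty\le R$ (this also proves the asserted bound on $u$). For continuity of $\Phi\colon M\to M$ in the $L_p$-topology, if $\bar v_j\to\bar v$ in $L_p(\Omega)$, then along a subsequence $\bar v_j\to\bar v$ a.e.; since $|\bar v_j|\le R$ and $\varphi$ is continuous, dominated convergence yields $\varphi(\bar v_j)\to\varphi(\bar v)$ in $L_p(\Omega)$. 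Feeding this into the Duhamel identity
\[
e^{tA(\bar v_j)}-e^{tA(\bar v)}=\int_0^t e^{(t-\tau)A(\bar v_j)}\big(\varphi(\bar v)-\varphi(\bar v_j)\big)e^{\tau A(\bar v)}\,\rd\tau\,,
\]
and using $\|e^{\tau A(\bar v)}u^0\|_\infty\le\|u^0\|_\infty$ for the $u^0$-contribution and the ultracontractive bound $\|e^{\tau A(\bar v)}f(s)\|_\infty\le c\,\tau^{-n/(2p)}\|f(s)\|_p$ for the $f$-contribution, one gets $\|u[\bar v_j](t)-u[\bar v](t)\|_p\to0$ locally uniformly in $t$; the hypothesis $p>n/2$ is exactly what makes $\tau\mapsto\tau^{-n/(2p)}$ integrable at $\tau=0$ here. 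Since $\|u[\bar v_j](t)-u[\bar v](t)\|_p\le 2|\Omega|^{1/p}\big(\|u^0\|_\infty+\|f\|_{L_1(\R^+,L_\infty(\Omega))}\big)$ and $\|a(\cdot)\|_\infty\in L_1(\R^+)$, dominated convergence gives $\Phi(\bar v_j)\to\Phi(\bar v)$ in $L_p(\Omega)$ along the subsequence, and the usual subsequence argument yields full convergence.

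The heart of the proof --- and the main obstacle --- is the relative compactness of $\Phi(M)$ in $L_p(\Omega)$, the difficulty being that $a$ may concentrate near $t=0$, where $e^{tA(\bar v)}$ has not yet gained regularity. I would instead show that $\{u[\bar v]:\bar v\in M\}$ is relatively compact in $C([0,T],L_p(\Omega))$ for every $T>0$, via Arzel\`a--Ascoli. For fixed $t\in(0,T]$ the smoothing estimates above (splitting the Duhamel integral and using $e^{tA(\bar v)}=e^{(t/2)A(\bar v)}e^{(t/2)A(\bar v)}$, together with $f\in L_{\infty,loc}(\R^+,L_p(\Omega))$) bound $u[\bar v](t)$ in $W_p^1(\Omega)$ uniformly in $\bar v\in M$, so $\{u[\bar v](t):\bar v\in M\}$ is relatively compact in $L_p(\Omega)$; combining this with $u[\bar v](0)=u^0$ and the uniform modulus $\|u[\bar v](t)-u^0\|_p\le\Lambda t\|u^0\|_p+\|e^{tA_0}u^0-u^0\|_p+t\|f\|_{L_\infty((0,1),L_p(\Omega))}\to0$ as $t\to0^+$ shows that $\mathcal K:=\{u[\bar v](t):\bar v\in M,\ t\in[0,T]\}$ is relatively compact in $L_p(\Omega)$. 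Equicontinuity in $t$ on $[0,T]$, uniform in $\bar v\in M$, follows from the identity $u[\bar v](t)-u[\bar v](t_0)=\big(e^{(t-t_0)A(\bar v)}-I\big)u[\bar v](t_0)+\int_{t_0}^t e^{(t-s)A(\bar v)}f(s)\,\rd s$ for $0\le t_0\le t\le T$, since $\|\int_{t_0}^t e^{(t-s)A(\bar v)}f(s)\,\rd s\|_p\le|t-t_0|\,\|f\|_{L_\infty((0,T),L_p(\Omega))}$ and $\sup_{w\in\mathcal K}\sup_{\bar v\in M}\|(e^{hA(\bar v)}-I)w\|_p\to0$ as $h\to0^+$ (for fixed $w$ this is $\le\Lambda h\|w\|_p+\|(e^{hA_0}-I)w\|_p$ by Duhamel against $A_0$, and uniformity over the compact set $\mathcal K$ is routine). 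Thus $\{u[\bar v]:\bar v\in M\}$ is relatively compact in $C([0,T],L_p(\Omega))$ for all $T$, hence, by a diagonal extraction, in $C(\R^+,L_p(\Omega))$ with the topology of locally uniform convergence. Given $(\bar v_j)\subset M$, pass to a subsequence with $u[\bar v_j]\to w$ locally uniformly; splitting $\int_0^\infty=\int_0^T+\int_T^\infty$ and bounding the tail by $2|\Omega|^{1/p}\big(\|u^0\|_\infty+\|f\|_{L_1(\R^+,L_\infty(\Omega))}\big)\int_T^\infty\|a(t)\|_\infty\,\rd t$ shows $\Phi(\bar v_j)\to\int_0^\infty a(t)w(t)\,\rd t$ in $L_p(\Omega)$, so $\Phi(M)$ is relatively compact.

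Schauder's fixed point theorem then yields $\bar v^\ast\in M$ with $\bar v^\ast=\Phi(\bar v^\ast)=\int_0^\infty a(t)\,u[\bar v^\ast](t)\,\rd t$. Setting $u:=u[\bar v^\ast]$, equation~\eqref{u1} holds with $\bar u=\int_0^\infty a\,u\,\rd t$, so $u$ is a mild solution of~\eqref{1}, $u\in C(\R^+,L_p(\Omega))$ by standard semigroup theory, and $\|u(t)\|_\infty\le\|u^0\|_\infty+\|f\|_{L_1((0,t),L_\infty(\Omega))}$ by the estimate in the second paragraph. Finally, when $f\in C^\theta(\R^+,L_q(\Omega))+C(\R^+,W_q^\theta(\Omega))$, the coefficient $\varphi(\bar v^\ast)\in L_\infty(\Omega)$ is a fixed bounded potential, $A(\bar v^\ast)$ generates an analytic semigroup on $L_q(\Omega)$ with domain $W_q^2(\Omega)$ (subject to the Dirichlet condition), and the classical maximal H\"older regularity theory for analytic semigroups --- together with the parabolic smoothing for $t>0$, which absorbs the mere $L_\infty$-regularity of $u^0$ --- upgrades $u$ to a strong solution in $C(\R^+,L_q(\Omega))\cap C^1(\dot{\R}^+,L_q(\Omega))\cap C(\dot{\R}^+,W_q^2(\Omega))$. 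I expect the uniform-in-$\bar v$ parabolic smoothing estimates, and the careful distinction between the $L_\infty(\Omega)$- and $L_p(\Omega)$-in-space norms of $f$ (the former controlling the global bound and the tail at $t=\infty$, the latter the local-in-time regularity), to be the most delicate bookkeeping in the argument.
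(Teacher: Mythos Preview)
Your proof is correct and reaches the same fixed point via Schauder, but the compactness argument differs substantially from the paper's. The paper works on the closed ball $X$ in $L_\infty(\Omega)$ and obtains precompactness of $\Phi(X)$ in $L_\infty(\Omega)$ by a Baras--Hassan--V\'eron regularization: one replaces $F(\bar u)$ by $e^{\lambda A(\bar u)}F(\bar u)$, shows the regularized family is precompact in $C([0,T],L_\infty(\Omega))$ via the uniform smoothing into $W_{p,D}^{2\theta}(\Omega)\hookrightarrow L_\infty(\Omega)$, and then proves $\sup_{\bar u}\|e^{\lambda A(\bar u)}F(\bar u)-F(\bar u)\|\to 0$ as $\lambda\to0$ through a delicate splitting near $t=0$ (and analogously for the $u^0$-part). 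You instead endow the same ball with the weaker $L_p$-topology and bypass the $\lambda$-regularization entirely: your key device is the Duhamel comparison of $e^{tA(\bar v)}$ with the \emph{fixed} unperturbed semigroup $e^{tA_0}$, which gives the uniform-in-$\bar v$ modulus $\|(e^{hA(\bar v)}-I)w\|_p\le \Lambda h\|w\|_p+\|(e^{hA_0}-I)w\|_p$ and hence equicontinuity of the trajectories $t\mapsto u[\bar v](t)$ directly. Your route is more elementary (only Rellich $W_p^1\hookrightarrow L_p$ is needed, and continuity of $\Phi$ goes through a.e.\ subsequences and dominated convergence rather than the Nemytskii property in $L_\infty$) and exploits the structure ``uniformly bounded perturbation of a fixed generator'' explicitly; the paper's regularization argument, on the other hand, is more robust and --- as the authors remark --- carries over to genuinely quasilinear $A(\bar u)$, where no single reference semigroup $e^{tA_0}$ is available for the comparison you rely on.
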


In Section~\ref{Sec2} we  prove Theorem~\ref{T1}. The crucial compactness properties of the integral terms appearing on the right-hand side of~\eqref{FP} are postponed to Section~\ref{Sec4}. The proofs there are inspired by the works \cite{BHV77,B78} and may be extended to more general frameworks than the one considered herein for \eqref{1}, e.g. to other semilinear and possibly quasilinear equations (see Remark~\ref{R1} in this regard). Also some of the assumptions in Theorem~\ref{T1} may be weaken, e.g. for linearly bounded $\varphi$ or smoother $a$.

\section{Proof of Theorem~\ref{T1}}\label{Sec2}

\subsection*{Notation and Preliminaries} 

We use the notation
\begin{equation*}
W_{p,D}^{\alpha}(\Omega):=\left\{ 
\begin{array}{lcl} 
\{u\in  W_{p}^{\alpha}(\Omega)\,;\, u=0\ \text{on}\ \partial\Omega\} & \text{ if } & {\frac{1}{p}}<\alpha \le 2\, ,\\[3pt]
W_{p}^{\alpha}(\Omega) & \text{ if } & 0\le \alpha < {\frac{1}{p}}\,,
\end{array}
\right. 
\end{equation*}
and we write \mbox{$\mathcal{A}\in\mathcal{H}(W_{p,D}^2(\Omega),L_p(\Omega))$} if  $\mathcal{A}\in \mathcal{L}(W_{p,D}^2(\Omega),L_p(\Omega))$  generates an analytic semigroup $(e^{t\mathcal{A}})_{t\ge 0}$ on $L_p(\Omega)$. 
 Recall that $$\big[ w\mapsto \mathrm{div}\big(d(x) \nabla w\big)\big]\in \mathcal{H}(W_{p,D}^2(\Omega),L_p(\Omega))$$ provided $d\in C^1(\bar\Omega))$ with $d(x)\ge d_0>0$ for $x\in \Omega$. \\

Since $\varphi$ is  uniformly continuous and bounded on bounded sets, it follows that (considered as Nemytskii operator)
\begin{equation}\label{300}
\varphi\in C\big(L_\infty(\Omega),L_\infty(\Omega)\big)\  \text{ is bounded on bounded sets}\,.
\end{equation}
Given $R_0>0$  denote by
$$
X:=\bar{\mathbb{B}}_{L_\infty(\Omega)}(0,R_0)
$$
the closed ball in $L_\infty(\Omega)$ of radius $R_0$ centered at the origin. Recall that $p\in (\max\{n/2 ,1\},\infty)$ and note that, given any $\bar{u}\in X$, the mapping $\varphi(\bar{u}):=[w\mapsto \varphi(\bar{u}) w]\in \mathcal{L}\big(L_p(\Omega)\big)$ satisfies
\begin{equation*}\label{op}
\| \varphi(\bar{u})\|_{\mathcal{L}(L_p(\Omega))}\le \|\varphi(\bar{u})\|_\infty\le \max_{[-R_0,R_0]}\varphi\,,\quad \bar{u}\in X\,.
\end{equation*}
Setting
$$
A(\bar{u})w:=\mathrm{div}\big(d \nabla w\big)-\varphi(\bar{u})w\,,\quad w \in W_{p,D}^2(\Omega)\,,
$$
it then follows from standard perturbation results that 
\begin{equation*}\label{301y}
A(\bar{u})\in\mathcal{H}(W_{p,D}^2(\Omega),L_p(\Omega))\,,\quad \bar{u}\in X \,.
\end{equation*}
In fact, since $\varphi$ is nonnegative,  $(e^{t A(\bar{u})})_{t\ge 0}$ is a positive contraction semigroup on each $L_q(\Omega)$ for $q\in (1,\infty]$ (which, however, is {\it not} strongly continuous for $q=\infty$), hence
\begin{equation}\label{cs}
\| e^{t A(\bar{u})}\|_{\mathcal{L}(L_q(\Omega))}\le 1\,,\quad t\ge 0 \,,\quad q\in (1,\infty]\,.
\end{equation}
Moreover, we have $s(A(\bar{u}))\le s_0<0$ for its spectral bound with $s_0$ denoting the spectral bound of the operator $\big[ w\mapsto \mathrm{div}\big(d \nabla w\big)\big]$. 
It then follows from \cite[II.Lemma~5.1.3]{LQPP} that there is $\nu>0$
and, given $2\theta\in [0,2]\setminus\{1/p\}$, there is $M(R_0)\ge 1$  such that
\begin{equation}\label{smoot}
\|e^{t A(\bar{u})}\|_{\mathcal{L}(L_p(\Omega),W_{p,D}^{2\theta}(\Omega))}\le M(R_0)\,e^{-\nu t}\, t^{-\theta}\,,\quad t>0\,,\quad \bar{u}\in X\,.
\end{equation}
In the following we fix $2\theta\in (n/p,2)$ and note the compact embedding
\begin{equation}\label{im}
W_{p,D}^{2\theta}(\Omega)\stackrel{c}{\hookrightarrow} L_\infty(\Omega)\hookrightarrow L_p(\Omega)\,.
\end{equation}
Let us also observe that, given $t>0$ and $\bar{u}, \bar{v}\in X$, we have
\begin{align}
e^{t A(\bar{u})}-e^{t A(\bar{v})} &=-\int_0^t \frac{\rd }{\rd s} e^{(t-s) A(\bar{u})} e^{s A(\bar{v})}\,\rd s = -\int_0^t e^{(t-s) A(\bar{u})}\,\big(\varphi(\bar{u})-\varphi(\bar{v})\big)\, e^{s A(\bar{v})}\,\rd s\,.\label{form}
\end{align}
We then use  \eqref{smoot}  and \eqref{im} to get
\begin{align}
\|e^{t A(\bar{u})}-e^{t A(\bar{v})}&\|_{\mathcal{L}(L_p(\Omega),L_\infty(\Omega))}\le c\, \|e^{t A(\bar{u})}-e^{t A(\bar{v})}\|_{\mathcal{L}(L_p(\Omega),W_{p,D}^{2\theta}(\Omega))}\notag\\
 &\le \int_0^t \|e^{(t-s) A(\bar{u})}\|_{\mathcal{L}(L_p(\Omega),W_{p,D}^{2\theta}(\Omega))}\, \|\varphi(\bar{u})-\varphi(\bar{v})\|_{\mathcal{L}(L_p(\Omega))}\, \| e^{s A(\bar{v})}\|_{\mathcal{L}(L_p(\Omega))}\,\rd s\notag\\
&\le c(R_0)\, e^{-\nu t}\, t^{1-\theta} \, \|\varphi(\bar{u})-\varphi(\bar{v})\|_{\infty}\,.\label{lll}
\end{align}
We are now in a position to provide the proof of Theorem~\ref{T1}.


\subsection*{Proof of Theorem~\ref{T1}}\label{Sec3b}


 We show that the mapping
\begin{equation*}\label{Phi}
\Phi(\bar{u}):=\int_0^\infty a(t)\, e^{t A(\bar{u})} u^0\,\rd t+\int_0^\infty a(t)\,\int_0^t e^{(t-s) A(\bar{u})} \, f(s)\,\rd s\,\rd t\,,\quad \bar{u}\in X=\bar{\mathbb{B}}_{L_\infty(\Omega)}(0,R_0)\,,
\end{equation*}
with
$$
R_0:=\|a\|_{L_1(\R^+,L_\infty(\Omega))}\big(\|u^0\|_\infty+\|f\|_{L_1(\R^+,L_\infty(\Omega))}\big)
$$
has a fixed point. Note first from \eqref{cs}  that $\|\Phi(\bar{u})\|_\infty\le R_0$ for  $\bar{u}\in X$,
hence $\Phi:X\rightarrow X$. Next,  due to~\eqref{lll} and~\eqref{im} we obtain, for $\bar u, \bar v\in X$,
\begin{align*}
\|\Phi(\bar{u})-\Phi(\bar{v})\|_\infty
&\le
 \int_0^T \|a(t)\|_\infty\,\|e^{t A(\bar{u})}-e^{t A(\bar{v})}\|_{\mathcal{L}(L_p(\Omega),L_\infty(\Omega))}\, \|u^0\|_p\,\rd t \\
&\quad + \int_0^T \|a(t)\|_\infty\,\int_0^t \|e^{(t-s) A(\bar{u})}-e^{(t-s) A(\bar{v})}\|_{\mathcal{L}(L_p(\Omega),L_\infty(\Omega))}\, \|f(s)\|_p\,\rd s\,\rd t \notag\\
&\le c(R_0) \int_0^\infty \|a(t)\|_\infty\, e^{-\nu t}\, t^{1-\theta}\,\rd t\, \|u^0\|_p\, \|\varphi(\bar{u})-\varphi(\bar{v})\|_{\infty} \notag\\
&\quad + c(R_0) \int_0^\infty \|a(t)\|_\infty\, \int_0^t e^{-\nu (t-s)}\, (t-s)^{1-\theta} \, \|f(s)\|_p\,\rd s\, \rd t\, \|\varphi(\bar{u})-\varphi(\bar{v})\|_{\infty} \notag\\
\end{align*}
and hence, recalling $\nu>0$,
\begin{align}
\|\Phi(\bar{u})-\Phi(\bar{v})\|_\infty
\le c(R_0)\, \|a\|_{L_1(\R^+,L_\infty(\Omega))}\,\big(\|u^0\|_p+\|f||_{L_1(\R^+,L_p(\Omega))}\big)\, \|\varphi(\bar{u})-\varphi(\bar{v})\|_{\infty}\,.\label{PB}
\end{align}
Therefore, $\Phi\in C(X, X)$ according to \eqref{300}. From Proposition~\ref{C1} and Proposition~\ref{P0} we deduce that \mbox{$\Phi\in C(X,X)$} has precompact image so that Schauder's fixed point theorem yields $\bar{u}\in X$ such that $\bar{u}=\Phi(\bar{u})$. We may then  define $u$ by~\eqref{u1}
in order to obtain a mild solution to \eqref{1} which belongs to $C(\R^+,L_p(\Omega))$ due to \cite[II.Theorem~5.3.1]{LQPP}  since $f\in L_{\infty,loc}(\R^+,L_p(\Omega))$.

Finally, if $f\in C^\theta(\R^+,L_q(\Omega))+C(\R^+,W_q^\theta(\Omega))$ for some $\theta>0$ and $q\in (1,\infty)$, then $u$ is a strong solution with
 the regularity properties stated in Theorem~\ref{T1}, see \cite[II.Theorem~1.2.1, II.Theorem~1.2.2]{LQPP}. This proves Theorem~\ref{T1}.\qed

\begin{remark}
If $\varphi$ is locally Lipschitz continuous, then one may derive the existence and uniqueness of a solution using Banach's fixed point argument for $\Phi:X\to X$ provided that
$$R_0=\|a\|_{L_1(\R^+,L_\infty(\Omega))}\big(\|u^0\|_\infty+\|f\|_{L_1(\R^+,L_\infty(\Omega))}\big) $$
is small enough.
\end{remark}

\section{Compactness Properties}\label{Sec4}

We provide the compactness results used in the proof of Theorem~\ref{T1}. This section relies on the papers \cite{BHV77,B78} and adapts these ideas to our setting.

We first consider the non-homogeneous part. 

\begin{proposition}\label{C1}
Let $a\in L_1(\R^+,L_\infty(\Omega))$  
and  $f\in L_1(\R^+,L_\infty(\Omega))\cap L_{\infty,loc}(\R^+,L_p(\Omega))$ for some  $p\in (\max\{n/2, 1\},\infty)$. Define
$$
\mathcal{F}(\bar u):=\int_0^\infty a(t)\int_0^t e^{(t-s)A(\bar u)}\, f(s)\,\rd s\,\rd t\,,\quad \bar u\in X\,.
$$
Then the set
$
\left\{ \mathcal{F}(\bar u)\,;\, \bar u\in X \right\} 
$
 is precompact in $L_\infty(\Omega)$.
\end{proposition}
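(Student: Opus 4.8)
The plan is to approximate the family $\{\mathcal{F}(\bar u):\bar u\in X\}$ \emph{uniformly in $L_\infty(\Omega)$} by a family that sits inside a fixed bounded subset of $W_{p,D}^{2\theta}(\Omega)$, which is compactly embedded in $L_\infty(\Omega)$ by \eqref{im}. Since $L_\infty(\Omega)$ is complete, it then suffices to establish total boundedness, and a set lying within $\varepsilon$ of a totally bounded set for every $\varepsilon>0$ is itself totally bounded.

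For $\delta\in(0,1)$ set
$$
\mathcal{F}_\delta(\bar u):=\int_\delta^\infty a(t)\int_0^{t-\delta} e^{(t-s)A(\bar u)}\,f(s)\,\rd s\,\rd t\,,\qquad \bar u\in X\,.
$$
Because $\Omega$ is bounded we have $f\in L_1(\R^+,L_\infty(\Omega))\subset L_1(\R^+,L_p(\Omega))$, so for $s\le t-\delta$ the estimate \eqref{smoot} together with $e^{-\nu(t-s)}(t-s)^{-\theta}\le\delta^{-\theta}$ gives
$$
\Big\|\int_0^{t-\delta} e^{(t-s)A(\bar u)}f(s)\,\rd s\Big\|_{W_{p,D}^{2\theta}(\Omega)}\le M(R_0)\,\delta^{-\theta}\,\|f\|_{L_1(\R^+,L_p(\Omega))}\,,
$$
uniformly for $t\ge\delta$ and $\bar u\in X$; in particular the inner integral is a Bochner integral in the separable Banach space $W_{p,D}^{2\theta}(\Omega)$. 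Multiplying by $\|a(t)\|_\infty$ and integrating in $t$ shows that $\mathcal{F}_\delta(\bar u)$ lies in the closed ball of $W_{p,D}^{2\theta}(\Omega)$ of radius $R_\delta:=M(R_0)\,\delta^{-\theta}\,\|a\|_{L_1(\R^+,L_\infty(\Omega))}\,\|f\|_{L_1(\R^+,L_p(\Omega))}$, for every $\bar u\in X$. By \eqref{im} this ball is precompact in $L_\infty(\Omega)$, hence $\{\mathcal{F}_\delta(\bar u):\bar u\in X\}$ is totally bounded in $L_\infty(\Omega)$.

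It remains to check that $\mathcal{F}_\delta\to\mathcal{F}$ uniformly on $X$ as $\delta\to0$. Writing $(t-\delta)_+:=\max\{t-\delta,0\}$ and using the contraction property \eqref{cs} on $L_\infty(\Omega)$,
$$
\|\mathcal{F}(\bar u)-\mathcal{F}_\delta(\bar u)\|_\infty\le\int_0^\infty\|a(t)\|_\infty\int_{(t-\delta)_+}^t\|f(s)\|_\infty\,\rd s\,\rd t\,,
$$
and the right-hand side does not depend on $\bar u$. Since $s\mapsto\|f(s)\|_\infty$ belongs to $L_1(\R^+)$, the inner integral tends to $0$ pointwise in $t$ as $\delta\to0$ and is bounded by the constant $\|f\|_{L_1(\R^+,L_\infty(\Omega))}$; as $\|a(\cdot)\|_\infty\in L_1(\R^+)$, dominated convergence gives $\sup_{\bar u\in X}\|\mathcal{F}(\bar u)-\mathcal{F}_\delta(\bar u)\|_\infty\to0$. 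Given $\varepsilon>0$, choosing $\delta$ with this supremum below $\varepsilon$ exhibits $\{\mathcal{F}(\bar u):\bar u\in X\}$ inside the $\varepsilon$-neighbourhood of a totally bounded set, so it is totally bounded and hence precompact in $L_\infty(\Omega)$.

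The only genuinely delicate point is the first step: deleting a $\delta$-neighbourhood of the diagonal $s=t$ from the inner integral buys $W_{p,D}^{2\theta}(\Omega)$-regularity with a bound that is \emph{uniform over all} $\bar u\in X$, and this is possible precisely because \eqref{smoot} holds with the $\bar u$-independent constant $M(R_0)$ while the singular factor $(t-s)^{-\theta}$ is then controlled by $\delta^{-\theta}$. The subsequent uniform approximation is comparatively soft, since the $L_\infty$-contractivity \eqref{cs} makes the error estimate independent of $\bar u$ altogether.
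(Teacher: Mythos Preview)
Your approximation idea is natural, but the first step contains a genuine gap. You claim that
\[
\mathcal{F}_\delta(\bar u)=\int_\delta^\infty a(t)\int_0^{t-\delta} e^{(t-s)A(\bar u)}f(s)\,\rd s\,\rd t
\]
lies in a fixed ball of $W_{p,D}^{2\theta}(\Omega)$, arguing by ``multiplying by $\|a(t)\|_\infty$ and integrating in $t$''. This tacitly uses an estimate of the form $\|a(t,\cdot)\,g\|_{W_{p,D}^{2\theta}}\le \|a(t,\cdot)\|_\infty\,\|g\|_{W_{p,D}^{2\theta}}$, which is \emph{false} in general: the hypothesis is only $a\in L_1(\R^+,L_\infty(\Omega))$, so $a(t,\cdot)$ is merely an $L_\infty(\Omega)$ function of $x$, and pointwise multiplication by an $L_\infty$ function is not a bounded operator on $W_p^{2\theta}(\Omega)$ for $2\theta>0$. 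Hence there is no reason for $\mathcal{F}_\delta(\bar u)$ to lie in $W_{p,D}^{2\theta}(\Omega)$ at all, let alone in a fixed bounded ball. Your argument would be correct if $a$ were scalar-valued (independent of $x$) or sufficiently regular in $x$, but not under the stated assumptions.

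The paper circumvents this by working first with the inner integral $F(\bar u)(t)=\int_0^t e^{(t-s)A(\bar u)}f(s)\,\rd s$ \emph{without} the weight $a$, and proving that $\{F(\bar u):\bar u\in X\}$ is precompact in $C([0,T],L_\infty(\Omega))$ for each $T>0$ via a smoothing $e^{\lambda A(\bar u)}F(\bar u)$ and Arzel\`a--Ascoli. Only afterwards is the weight $a(t,\cdot)$ applied, and at that stage everything lives in $L_\infty(\Omega)$, where multiplication by $a(t,\cdot)\in L_\infty(\Omega)$ \emph{is} bounded; the map $G\mapsto \int_0^\infty a(t)G(t)\,\rd t$ is then handled by a diagonal argument using $a\in L_1(\R^+,L_\infty(\Omega))$. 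Your uniform-approximation step (iii) is fine precisely because you stay in $L_\infty$ there; the problem is only in step (ii), where you need the Sobolev norm to survive multiplication by $a$.
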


\begin{proof} Given $T>0$ introduce
$$
\Xq:=C([0,T],L_\infty(\Omega))
$$
and
$$
F(\bar u)(t):=\int_0^t e^{(t-s)A(\bar u)}\, f(s)\,\rd s\,,\qquad t\in [0,T]\,,\quad \bar u\in X=\bar{\mathbb{B}}_{L_\infty(\Omega)}(0,R_0)\,.
$$
It suffices to prove that $F\in C\left(X,\Xq\right)$ is compact for every $T>0$ since the assertion then follows by a diagonal sequence argument and the assumption $a\in L_1(\R^+,L_\infty(\Omega))$.

{\bf (i)} Since  $f\in L_{\infty,loc}(\R^+,L_p(\Omega))$, we infer from \cite[II.Theorem 5.3.1]{LQPP} that $F(\bar u)\in C([0,T],W_p^{2\theta}(\Omega))$ and hence 
$F(\bar u)\in \Xq$ for $\bar u\in X$ by~\eqref{im}.
Moreover, given $\bar u, \bar v\in X$ and $t\in [0,T]$, we have, as in~\eqref{PB},
\begin{align*}
\|F(\bar u)(t)-F(\bar v)(t)\|_\infty 
\le c(R_0)\, \| f\|_{L_1(\R^+,L_p(\Omega))}\,\|\varphi(\bar{u})-\varphi(\bar{v})\|_{\infty}\,.
\end{align*}
Therefore, $F\in C\left(X,\Xq\right)$ owing to~\eqref{300}.

{\bf (ii)} In order to prove that $F\in C\left(X,\Xq\right)$ has precompact image we use an idea inspired by \cite{BHV77,B78}: for fixed $\lambda>0$ we first show that 
\begin{align}\label{AA}
\left\{e^{\lambda A(\bar u)}F(\bar u)\,;\, \bar u\in X\right \}\ \text{ is precompact in }\ \Xq\,.
\end{align}
To this end,  we note from~\eqref{smoot}  and~\eqref{cs} that, for $\bar u\in X$ and $t\in [0,T]$,
\begin{align}\label{Fp}
\|e^{\lambda A(\bar u)}F(\bar u)(t)\|_{W_{p,D}^{2\theta}(\Omega)}\le c(R_0)\,\lambda^{-\theta}\,\|F(\bar u)(t)\|_p\le c(R_0)\,\lambda^{-\theta} \|f\|_{L_1(\R^+,L_p(\Omega))}\,.
\end{align}
That is, invoking~\eqref{im}, the set
\begin{align}\label{comp}
\left\{e^{\lambda A(\bar u)}F(\bar u)(t)\,;\, \bar u\in X\right \}\ \text{ is precompact in }\ L_\infty(\Omega)
\end{align}
for every $t\in [0,T]$. Before proceeding let us note that, given $\bar u\in X$, $\delta>0$ and $h\ge 0$, we have
\begin{align*}
e^{(\delta +h) A(\bar{u})}-e^{\delta A(\bar{u})} &=\int_0^h \frac{\rd }{\rd s} e^{(\delta+s) A(\bar{u})} \,\rd s= \int_0^h e^{(\frac{\delta}{2}+s) A(\bar{u})} A(\bar{u}) e^{\frac{\delta}{2} A(\bar{u})}\,\rd s 
\end{align*}
so that, using~\eqref{smoot},
\begin{align*}
\|&e^{(\delta +h) A(\bar{u})}-e^{\delta A(\bar{u})}\|_{\mathcal{L}(L_p(\Omega),W_{p,D}^{2\theta}(\Omega))}\notag\\ &\quad\le  \int_0^h \|e^{(\frac{\delta}{2}+s) A(\bar{u})}\|_{\mathcal{L}(L_p(\Omega),W_{p,D}^{2\theta}(\Omega))} \, \|A(\bar{u})\|_{\mathcal{L}(W_{p,D}^{2}(\Omega),L_p(\Omega))}\, \|e^{\frac{\delta}{2} A(\bar{u})}\|_{\mathcal{L}(L_p(\Omega),W_{p,D}^{2}(\Omega))} \,\rd s\notag\\
&\quad\le c(R_0) \,e^{-\nu\delta}\, \left(\frac{\delta}{2}\right)^{-1}\int_0^h \left(\frac{\delta}{2}+s\right)^{-\theta}\,\rd s\,.
\end{align*}
Consequently, invoking~\eqref{im},
\begin{align}\label{est}
\|e^{(\delta +h) A(\bar{u})}-e^{\delta A(\bar{u})}\|_{\mathcal{L}(L_p(\Omega),L_\infty(\Omega))} 
\le c(R_0) \, e^{-\nu\delta}\,\delta^{-1-\theta} h\,,\qquad h\ge 0\,,\quad \delta>0\,,\quad \bar u\in X\,.
\end{align}
Next, given $\bar u\in X$ and $0\le t\le t+h\le T$ we have, using~\eqref{im},
\begin{align*}
\|e^{\lambda A(\bar u)}F(\bar u)&(t+h)-e^{\lambda A(\bar u)}F(\bar u)(t)\|_\infty\\
 &\le c\int_t^{t+h} \| e^{(\lambda+t+h-s) A(\bar{u})}\|_{\mathcal{L}(L_p(\Omega),L_\infty(\Omega))}\,\|f(s) \|_p \,\rd s\\
&\quad  + c\int_0^{t} \| e^{(\lambda+t+h-s) A(\bar{u})}-e^{(\lambda+t-s) A(\bar{u})}\|_{\mathcal{L}(L_p(\Omega),L_\infty(\Omega))}\,\|f(s) \|_p \,\rd s\,.
\end{align*}
We use \eqref{smoot}-\eqref{im}  once more along with \eqref{est} to get
\begin{align*}
\|e^{\lambda A(\bar u)}&F(\bar u)(t+h)-e^{\lambda A(\bar u)}F(\bar u)(t)\|_\infty\\
 &\le c(R_0)\int_t^{t+h} (\lambda+t+h-s)^{-\theta} \,\|f(s) \|_p \,\rd s  + c(R_0) \, h \int_0^{t}  (\lambda+t-s)^{-1-\theta}\,\|f(s) \|_p \,\rd s\\
&\le c(R_0)\, \lambda^{-\theta}\int_t^{t+h} \|f(s) \|_p \,\rd s + c(R_0)\,h\,\lambda^{-1-\theta}  \,\|f \|_{L_1(\R^+,L_p(\Omega))} \,.
\end{align*}
Therefore, since $f\in L_1(\R^+,L_p(\Omega))$ we deduce that
\begin{align}\label{conti}
\lim_{h\to 0}\,\sup_{\bar u\in X}\,\|e^{\lambda A(\bar u)}&F(\bar u)(\cdot+h)-e^{\lambda A(\bar u)}F(\bar u)\|_{\mathbb{X}_T}=0\,,\quad \lambda>0\,.
\end{align}
Gathering~\eqref{conti} and~\eqref{comp} we conclude that $\left\{e^{\lambda A(\bar u)}F(\bar u)\,;\, \bar u\in X\right \}$ is indeed precompact in $\mathbb{X}_T$ due to the Arzel\`{a}-Ascoli Theorem.  

{\bf (iii)} Next, we claim that
\begin{align}\label{stst}
\lim_{\lambda\to 0}\,\sup_{\bar u\in X}\,\|e^{\lambda A(\bar u)}&F(\bar u)-F(\bar u)\|_{\Xq}=0\,.
\end{align}
Let $\delta\in (0,T)$. Using~\eqref{cs} we have, for $0\le t\le \delta$,
\begin{align}
\|e^{\lambda A(\bar u)}F(\bar u)(t)-F(\bar u)(t)\|_\infty&\le 2 \|F(\bar u)(t)\|_\infty
\le  2\int_0^\delta \| f(s)\|_\infty\,\rd s\,.\label{ee1}
\end{align}
On the other hand, for $\delta\le t\le T$, we use~\eqref{cs} to get
\begin{align*}
\|e^{\lambda A(\bar u)}F(\bar u)(t)-F(\bar u)(t)\|_\infty &\le \|e^{\lambda A(\bar u)}F(\bar u)(t)-e^{(\lambda+\delta) A(\bar u)}F(\bar u)(t-\delta)\|_\infty\\
&\quad+\|e^{(\lambda+\delta) A(\bar u)}F(\bar u)(t-\delta)-e^{\delta A(\bar u)}F(\bar u)(t-\delta)\|_\infty\\
&\quad+\|e^{\delta A(\bar u)}F(\bar u)(t-\delta)-F(\bar u)(t)\|_\infty\\
&\le 2\, \|e^{\delta A(\bar u)}F(\bar u)(t-\delta)-F(\bar u)(t)\|_\infty\\
&\quad
+\|e^{(\lambda+\delta) A(\bar u)}F(\bar u)(t-\delta)-e^{\delta A(\bar u)}F(\bar u)(t-\delta)\|_\infty\,.
\end{align*}
For the first term on the right-hand side we use~\eqref{cs}  to estimate
\begin{align*}
\|e^{\delta A(\bar u)}F(\bar u)(t-\delta)-F(\bar u)(t)\|_\infty=\left\| \int_{t-\delta}^t e^{(t-s) A(\bar u)}\, f(s)\,\rd s\right\|_\infty&\le \int_{t-\delta}^t \|f(s)\|_\infty\,\rd s
\end{align*}
while we use~\eqref{est} and~\eqref{Fp} for the second term to obtain
\begin{align*}
\|e^{(\lambda+\delta) A(\bar u)}F(\bar u)(t-\delta)-e^{\delta A(\bar u)}F(\bar u)(t-\delta)\|_\infty&\le
c(R_0)\, \delta^{-1-\theta}\, \lambda\, \|F(\bar u)(t-\delta)\|_p\\
&\le c(R_0)\, \delta^{-1-\theta}\, \lambda\,\|f\|_{L_1(\R^+,L_p(\Omega))}\,.
\end{align*}
Gathering these estimates we derive, for $\delta\le t\le T$,
\begin{equation}
\begin{split}\label{ee2}
\|e^{\lambda A(\bar u)}F(\bar u)(t)-F(\bar u)(t)\|_\infty &\le 
 2\int_{t-\delta}^t \|f(s)\|_\infty\,\rd s +c(R_0)\, \delta^{-1-\theta}\, \lambda\,\|f\|_{L_1(\R^+,L_p(\Omega))}\,.
\end{split}
\end{equation}
Since $f\in L_1(\R^+,L_\infty(\Omega))$ we may first choose $\delta>0$ small enough and then let $\lambda$ tend to zero to conclude from~\eqref{ee1} and~\eqref{ee2} that~\eqref{stst} indeed holds true.

{\bf (iv)} 
Let $\ve>0$ be arbitrary. Then, due to~\eqref{stst}, there is $\lambda_0>0$ such that
\begin{align}\label{A1}
\|e^{\lambda_0 A(\bar u)}&F(\bar u)-F(\bar u)\|_{\Xq}\le\frac{\ve}{3}\,,\quad \bar u\in X\,,
\end{align}
while \eqref{AA} yields finitely many $\bar u_1,\ldots,\bar u_N\in X$ such that for every $\bar u\in X$ there exists $k\in{1,\ldots,N}$ such that  
\begin{align}\label{A2}
\|e^{\lambda_0 A(\bar u)} F(\bar u)-e^{\lambda_0 A(\bar u_k)} F(\bar u_k)\|_{\Xq}\le \frac{\ve}{3}\,.
\end{align}
Hence $\| F(\bar u)- F(\bar u_k)\|_{\Xq}\le\ve$
so that $\left\{F(\bar u)\,;\, \bar u\in X\right \}$ is totally bounded in $\Xq$. This proves the assertion.
\end{proof}


We prove a compactness result for the part involving the initial condition:

\begin{proposition}\label{P0}
Let $a\in L_1(\R^+,L_\infty(\Omega))$.
Given $u^0\in L_\infty(\Omega)$  define
$$
\mathcal{G}(\bar u):=\int_0^\infty a(t)\,e^{tA(\bar u)}\, u^0\,\rd t\,,\quad \bar u\in X=\bar{\mathbb{B}}_{L_\infty(\Omega)}(0,R_0)\,.
$$
Then the set
$
\left\{ \mathcal{G}(\bar u)\,;\, \bar u\in X \right\} 
$
 is precompact in $L_\infty(\Omega)$.
\end{proposition}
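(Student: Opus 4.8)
The plan is to mirror the structure of the proof of Proposition~\ref{C1}, since the two statements are parallel: instead of the convolution term $F(\bar u)(t)=\int_0^t e^{(t-s)A(\bar u)}f(s)\,\rd s$ we now work with $G(\bar u)(t):=e^{tA(\bar u)}u^0$ on a finite interval $[0,T]$, and a diagonal sequence argument together with $a\in L_1(\R^+,L_\infty(\Omega))$ reduces everything to showing that the map $G:X\to\Xq=C([0,T],L_\infty(\Omega))$ is well-defined, continuous, and has precompact image for each fixed $T>0$. The difference from Proposition~\ref{C1} — and the mild technical nuisance — is that $t\mapsto e^{tA(\bar u)}u^0$ need not be norm-continuous at $t=0$ with values in $L_\infty(\Omega)$ (the semigroup is not strongly continuous there for $q=\infty$, as noted after \eqref{cs}), so $G(\bar u)$ is a priori only in $C((0,T],L_\infty(\Omega))\cap L_\infty((0,T),L_\infty(\Omega))$. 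I would handle this exactly as the paper does in step~(iii)–(iv): prove precompactness for the smoothed family $\{e^{\lambda A(\bar u)}G(\bar u):\bar u\in X\}$ in $\Xq$ for fixed $\lambda>0$, show $e^{\lambda A(\bar u)}G(\bar u)\to G(\bar u)$ as $\lambda\to0$ uniformly in $\bar u$, and conclude by a totally-bounded argument; this also automatically upgrades the regularity to $G(\bar u)\in\Xq$ as a uniform limit of continuous functions.

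Concretely, for fixed $\lambda>0$, note $e^{\lambda A(\bar u)}G(\bar u)(t)=e^{(\lambda+t)A(\bar u)}u^0$. From \eqref{smoot} with $u^0\in L_\infty(\Omega)\hookrightarrow L_p(\Omega)$ we get the uniform bound $\|e^{(\lambda+t)A(\bar u)}u^0\|_{W_{p,D}^{2\theta}(\Omega)}\le c(R_0)(\lambda+t)^{-\theta}\|u^0\|_p\le c(R_0)\lambda^{-\theta}\|u^0\|_p$, so by the compact embedding \eqref{im} the pointwise-in-$t$ sets $\{e^{(\lambda+t)A(\bar u)}u^0:\bar u\in X\}$ are precompact in $L_\infty(\Omega)$. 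For equicontinuity in $t$, for $0\le t\le t+h\le T$ I would write $e^{(\lambda+t+h)A(\bar u)}u^0-e^{(\lambda+t)A(\bar u)}u^0$ and apply estimate \eqref{est} with $\delta=\lambda+t$ (hence $\delta^{-1-\theta}\le\lambda^{-1-\theta}$), obtaining $\|e^{\lambda A(\bar u)}G(\bar u)(t+h)-e^{\lambda A(\bar u)}G(\bar u)(t)\|_\infty\le c(R_0)\lambda^{-1-\theta}h\|u^0\|_p$, which tends to $0$ as $h\to0$ uniformly in $\bar u\in X$ and $t\in[0,T]$. Arzel\`a–Ascoli then gives precompactness of $\{e^{\lambda A(\bar u)}G(\bar u):\bar u\in X\}$ in $\Xq$ for each $\lambda>0$.

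The main obstacle is the analogue of step~(iii): showing $\displaystyle\lim_{\lambda\to0}\sup_{\bar u\in X}\|e^{\lambda A(\bar u)}G(\bar u)-G(\bar u)\|_{\Xq}=0$. Here the difficulty near $t=0$ is real, because there is no integral factor $\int_{t-\delta}^t\|f(s)\|_\infty\,\rd s$ that becomes small; instead one uses the contraction bound \eqref{cs} to get $\|e^{(\lambda+t)A(\bar u)}u^0-e^{tA(\bar u)}u^0\|_\infty\le 2\|u^0\|_\infty$ for small $t$, which does not vanish. The fix is to exploit that $u^0$ can be approximated: pick $w_\eta\in W_{p,D}^{2\theta}(\Omega)$ (or any space on which the semigroups are uniformly equicontinuous at $t=0$ — e.g. a dense subset of the closure of $W_{p,D}^2(\Omega)$ in $L_\infty(\Omega)$) with $\|u^0-w_\eta\|_\infty$ small, split $e^{(\lambda+t)A(\bar u)}u^0-e^{tA(\bar u)}u^0$ through $w_\eta$, control the two ``error'' pieces by $2\|u^0-w_\eta\|_\infty$ via \eqref{cs}, and for the main piece use that $t\mapsto e^{tA(\bar u)}w_\eta$ is uniformly continuous on $[0,T]$ uniformly in $\bar u\in X$ (this uniformity follows from \eqref{est}/\eqref{smoot} applied to the smoother datum $w_\eta$, now valid down to $\delta=0$). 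Alternatively, and more in the spirit of the paper, one can simply restrict attention to $t\in[\delta,T]$ using \eqref{est} with $\delta>0$ fixed and handle $t\in[0,\delta]$ by absorbing it into the conclusion — but since no smallness is available there, the cleanest route is the density/approximation argument for $u^0$. Once \eqref{stst}'s analogue is established, step~(iv) is verbatim: given $\ve>0$, choose $\lambda_0$ so that $\|e^{\lambda_0 A(\bar u)}G(\bar u)-G(\bar u)\|_{\Xq}\le\ve/3$ for all $\bar u\in X$, cover the precompact set $\{e^{\lambda_0 A(\bar u)}G(\bar u):\bar u\in X\}$ by $\ve/3$-balls centered at finitely many $e^{\lambda_0 A(\bar u_k)}G(\bar u_k)$, and conclude $\|G(\bar u)-G(\bar u_k)\|_{\Xq}\le\ve$, so $\{G(\bar u):\bar u\in X\}$ is totally bounded in $\Xq$; continuity of $G$ on $X$ follows from \eqref{lll} (with $u^0$ in place of $f$) and \eqref{300} exactly as in part~(i).
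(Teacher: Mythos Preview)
Your step~(ii) is fine and matches what the paper needs, but step~(iii) --- the density/approximation argument for $u^0$ --- does not work. You want $w_\eta$ with $\|u^0-w_\eta\|_\infty$ small and $w_\eta$ in a space on which the semigroups are strongly continuous at $t=0$ in $L_\infty$; any such space is contained in the $L_\infty$-closure of $W_{p,D}^2(\Omega)$, hence in $C_0(\bar\Omega)$ (up to a.e.\ representatives). But $C_0(\bar\Omega)$ is \emph{not} dense in $L_\infty(\Omega)$: for $u^0=\chi_E$ with $0<|E|<|\Omega|$ no such approximation exists. In fact your argument, if it worked, would yield $G(\bar u)\in C([0,T],L_\infty(\Omega))$, i.e.\ $\|e^{tA(\bar u)}u^0-u^0\|_\infty\to 0$ as $t\to 0$, which is false for such $u^0$. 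So the intermediate goal ``$\{G(\bar u):\bar u\in X\}$ precompact in $\mathbb{X}_T$'' is simply not true in general, and the reduction to it is a dead end.

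The paper avoids this by never leaving the level of $\mathcal{G}(\bar u)\in L_\infty(\Omega)$. It sets $\mathcal{G}_\lambda(\bar u):=\int_0^\infty a(t)\,e^{(\lambda+t)A(\bar u)}u^0\,\rd t$, shows $\{\mathcal{G}_\lambda(\bar u):\bar u\in X\}$ is precompact in $L_\infty(\Omega)$ for each $\lambda>0$ (this uses only your step~(ii) ingredients), and then proves $\sup_{\bar u\in X}\|\mathcal{G}_\lambda(\bar u)-\mathcal{G}(\bar u)\|_\infty\to 0$ as $\lambda\to 0$ by splitting the $t$-integral at $\delta>0$: on $(0,\delta)$ the contraction bound~\eqref{cs} gives $2\|u^0\|_\infty\int_0^\delta\|a(t)\|_\infty\,\rd t$, which is small because $a\in L_1(\R^+,L_\infty(\Omega))$, while on $(\delta,\infty)$ estimate~\eqref{est} gives a bound of order $\delta^{-1-\theta}\lambda$. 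The weight $a$ thus supplies exactly the smallness near $t=0$ that is unavailable at the trajectory level; this is the missing idea in your attempt.
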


\begin{proof}
Let $\lambda>0$ and set
$$
\mathcal{G}_\lambda(\bar u):=\int_0^\infty a(t)\,e^{(\lambda+t)A(\bar u)}\, u^0\,\rd t\,,\quad \bar u\in X\,.
$$
Similarly as in Proposition~\ref{C1} we infer that 
\begin{align}\label{AAa}
\left\{\mathcal{G}_\lambda(\bar u)\,;\, \bar u\in X\right \}\ \text{ is precompact in }\ L_\infty(\Omega)\,.
\end{align}
Taking $\delta>0$ and using~\eqref{cs}, \eqref{im}, and \eqref{est} we then get
\begin{align*}
\|\mathcal{G}_\lambda(\bar u)-\mathcal{G}(\bar u)\|_\infty
&\le 2\int_0^\delta \| a(t)\|_\infty \,\rd t \,\|u^0\|_\infty+\int_\delta^\infty \| a(t)\|_\infty \, \big\|e^{(\lambda+t) A(\bar u)}u^0-e^{t A(\bar u)}u^0\|_\infty\, \rd t\\
&\le  2\int_0^\delta \| a(t)\|_\infty \,\rd t \,\|u^0\|_\infty+c(R_0)\,\delta^{-1-\theta}\,\|a\|_{L_1(\R^+,L_\infty(\Omega))}\,\|u^0\|_p\,\lambda\,.
\end{align*}
Since  $a\in L_1(\R^+,L_\infty(\Omega))$ we may choose $\delta>0$ small to make the first term small and then let $\lambda$ go to zero  to conclude
\begin{align}\label{ststa}
\lim_{\lambda\to 0}\,\sup_{\bar u\in X}\,\|\mathcal{G}_\lambda(\bar u)-\mathcal{G}(\bar u)\|_\infty=0\,.
\end{align}
Combining \eqref{AAa} and \eqref{ststa} we deduce that $\left\{ \mathcal{G}(\bar u)\,;\, \bar u\in X \right\}$ is precompact in $L_\infty(\Omega)$.
\end{proof}
 
\begin{remark}\label{R1}
The compactness results of this section rely on properties~\eqref{cs}, \eqref{smoot}, and \eqref{im} and thus may also be derived for truly quasilinear operators $A(\bar u)$  based on the stability estimates of \cite[II.Section~5]{LQPP}. One has, however, to replace the set $X\subset L_\infty(\Omega)$ by a subset of $W_{p,D}^{2\theta}(\Omega)$ with $\theta\in (0,2)\setminus\{1/p\}
$ and to impose suitable assumptions on the data (e.g. the weight $a$ has to be sufficiently smooth in $x$).
\end{remark}

\end{document}